\newtheorem{thm}{Theorem}[section]
\begin{document}

\title[Class number one criterion]{Class number one criterion for some non-normal totally real cubic fields}

   \author[Jun Ho Lee]
    {Jun Ho Lee}
    \address{School of Mathematics, Korea Institute for Advanced Study, Hoegiro 85, Dongdaemun-gu, Seoul 130-722, Republic of Korea}
     \email{jhleee@kias.re.kr}

    \thanks{2010 {\it Mathematics Subject Classification.}
    Primary 11R42; Secondary 11R16, 11R29.}
    \thanks{{\it key words and phrases.} cubic fields, class number, zeta function.}

\maketitle

\begin{abstract}
 Let ${\{K_m\}_{m\geq 4}}$ be the family of non-normal totally real
        cubic number fields defined by the irreducible cubic polynomial
        $f_m(x)=x^3-mx^2-(m+1)x-1$, where $m$ is an integer with $m\geq
        4$. In this paper, we will give a class number one criterion for $K_m$.

\end{abstract}

\section{Introduction}
\label{sec:introduction}

    It has been known for a long time that there exists close connection between prime producing polynomials and class number one
    problem for some number fields. Rabinowitsch\cite{Ra} proved that for a prime number $q$,
    the class number of $\mathbb{Q}(\sqrt{1-4q})$ is equal to one if and only if $k^2+k+q$ is prime for every $k=0,1,\ldots,q-2.$
    For real quadratic fields, many authors\cite{BK1,BK2,Mo,Yo} considered the connection between prime producing polynomials
    and class number. For the simplest cubic fields, Kim and Hwang\cite{KH}
    gave a class number one criterion which is related to some prime producing polynomials. The aim of this paper is to give
    a class number one criterion for some non-normal totally real cubic fields. Its criterion provides some polynomials having almost prime
    values in a given interval. The method done in this paper is basically same as one in \cite{BK1,BK2,KH}.

    Let $\zeta_K(s)$ be the Dedekind zeta function of an algebraic number field $K$ and $\zeta_K(s,P)$ be the partial zeta
    function for the principal ideal class $P$ of $K$. Then we have
    \begin{displaymath}
    \zeta_K(-1) \leq \zeta_K(-1,P).
    \end{displaymath}
    Halbritter and Pohst\cite{HP} developed a method of expressing special values of the partial zeta
    functions of totally real cubic fields as a finite sum involving norm, trace, and 3-fold Dedekind
    sums. Their result has been exploited by Byeon\cite{B} to give an explicit formula for the values
    of the partial zeta functions of the simplest cubic fields. Kim and Hwang\cite{KH} gave a class number one
    criterion for the simplest cubic fields by estimating the value $\zeta_K(-1)$
    and combining Byeon's result. In this paper, we will do this kind of work in some non-normal totally
    real cubic fields. First, we apply Halbritter and Pohst's formula to our cubic fields, and then evaluate the upper bound
    of $\zeta_K(-1)$ by using Siegel's formula. Finally, combining this computation, we give a class number one criterion for some non-normal
    totally real cubic fields. Halbritter and Pohst\cite{HP} proved:

    \begin{thm}\label{theorem:1.1}
    Let $K$ be a totally real cubic field with discriminant $\Delta$. For $\alpha \in K$, the conjugates are denoted by $\alpha'$ and
    $\alpha''$, respectively. Furthermore, for $\alpha \in K$, let $\textup{Tr}(\alpha):=\alpha+\alpha'+\alpha''$ and $\textup{N}(\alpha):=\alpha\cdot\alpha'\cdot\alpha''$.
    Let $\widehat{K}:=K(\sqrt{\Delta}),k\in \mathbb{N}, k\geq2$, and $\{\epsilon_1,\epsilon_2\}$ be a system of fundamental units of $K$. Define $L$ by $L:=\textup{ln}|\epsilon_1/\epsilon_1''|\,\textup{ln}|\epsilon_2'/\epsilon_2''|-
    \textup{ln}|\epsilon_1'/\epsilon_1''|\,\textup{ln}|\epsilon_2/\epsilon_2''|$. Let $W$ be an integral ideal of $K$ with basis $\{w_1,w_2,w_3\}$.
    Let $\rho=\widetilde{w_3}$ for a dual basis $\widetilde{w_1},\widetilde{w_2},\widetilde{w_3}$ of $W$ subject to $$\textup{Tr}(w_i \widetilde{w}_j)=\delta_{ij}(1\leq i, j \leq 3).$$
    For $j=1,2$, set
    \begin{displaymath}
    E_j=\left(\begin{array}{rrr} 1 & 1 & 1\\ \epsilon_j & \epsilon_j' & \epsilon_j''\\ \epsilon_1\epsilon_2 & \epsilon_1'\epsilon_2' & \epsilon_1''\epsilon_2''\end{array}\right)
    \end{displaymath}
    and
    \begin{displaymath}
    B_\rho=\left(\begin{array}{rrr} \rho w_1 & \rho w_2 & \rho w_3\\
     \rho' w_1' & \rho' w_2' & \rho' w_3'\\
     \rho'' w_1'' & \rho'' w_2'' & \rho'' w_3''
      \end{array}\right).
    \end{displaymath}
    For $\tau_1, \tau_2 \in K$, $\nu =1,2$, set
    $$M(k,\nu,\tau_1,\tau_2):=0$$ if $\textup{det} E_\nu=0$, otherwise

    \begin{eqnarray*}
    \lefteqn{M(k,\nu,\tau_1,\tau_2)} \\
     & &: = \textup{sign}(L)(-1)^\nu [\widehat{K}:\mathbb{Q}]^{-1}\frac{(2\pi i)^{3k}}{(3k)!}\textup{N}(\rho)^k\\
     & & \ \ \ \cdot \sum_{m_1=0}^{3k} \sum_{m_2=0}^{3k} \binom{3k}{m_1,m_2}\\
     & & \ \ \ \cdot \{\frac{\textup{det}E_\nu}{|\textup{det}(E_\nu B_\rho)|^3} \mathbf{B}(3,m_1,m_2,3k-(m_1+m_2),(E_\nu B_\rho)^*,\mathbf{0})\\
     & & \ \ \ \cdot \sum_{\kappa_1=0}^{k-1} \sum_{\kappa_2=0}^{k-1} \sum_{\mu_1=0}^{k-1} \sum_{\mu_2=0}^{k-1} \binom{m_1-1}{k-1-(\kappa_1+\kappa_2),k-1-(\mu_1+\mu_2)}\\
     & & \ \ \ \cdot \binom{m_2-1}{\kappa_1,\mu_1} \binom{3k-1-(m_1+m_2)}{\kappa_2,\mu_2}\\
     & & \ \ \ \cdot \textup{Tr}_{\widehat{K}/\mathbb{Q}}(\tau_1^{\kappa_1+\kappa_2} \tau_1'\,^{\mu_1+\mu_2} \tau_1''\,^{3k-2-(m_1+\kappa_1+\kappa_2+\mu_1+\mu_2)}\\
     & & \ \ \ \cdot \tau_2^{\kappa_2} \tau_2'\,^{\mu_2} \tau_2''\,^{3k-1-(m_1+m_2+\kappa_2+\mu_2)})\},
    \end{eqnarray*}
    where  $(E_\nu B_\rho)^*$ denotes the transposed matrix of $(E_\nu B_\rho)$, and

    \begin{eqnarray*}
    \lefteqn{C(k,\nu,\tau_1,\tau_2)} \\
    & & : = \textup{sign}(L)(-1)^{\nu+1}\frac{(2\pi i)^{3k}}{12\cdot(3k-2)(k-1)!^3 }\textup{N}(\rho)^k\\
    & & \ \ \ \cdot \widetilde{B}_{3k-2}(0)|\textup{det}B_\rho|^{-1}\textup{sign}(\textup{det}E_\nu)\\
    & & \ \ \ \cdot\{\textup{sign}((\tau_1\tau_2-\tau_1'\tau_2')(\tau_1-\tau_1'))+\textup{sign}((\tau_1'\tau_2'-\tau_1''\tau_2'')(\tau_1'-\tau_1''))\\
    & & \ \ \ + \textup{sign}((\tau_1''\tau_2''-\tau_1\tau_2)(\tau_1''-\tau_1))+\textup{sign}(\tau_1''(\tau_1-\tau_1')(\tau_2'-\tau_2))\\
    & & \ \ \ + \textup{sign}(\tau_1(\tau_1'-\tau_1'')(\tau_2''-\tau_2'))+\textup{sign}(\tau_1'(\tau_1''-\tau_1)(\tau_2-\tau_2''))\\
    & & \ \ \ + \textup{N}(\tau_2)[\textup{sign}(\tau_1''(\tau_2-\tau_2')(\tau_1\tau_2-\tau_1'\tau_2'))\\
    & & \ \ \ + \textup{sign}(\tau_1(\tau_2'-\tau_2'')(\tau_1'\tau_2'-\tau_1''\tau_2''))+ \textup{sign}(\tau_1'(\tau_2''-\tau_2)(\tau_1''\tau_2''-\tau_1\tau_2))]\}.
    \end{eqnarray*}
    Define
   \begin{eqnarray*}
   \zeta(k,W,\epsilon_1,\epsilon_2):= M(k,1,\epsilon_1,\epsilon_2)+M(k,2,\epsilon_2,\epsilon_1)\\
     + C(k,1,\epsilon_1,\epsilon_2)+C(k,2,\epsilon_2,\epsilon_1).
   \end{eqnarray*}
   Let $\zeta_K(s,K_0)$ be the partial zeta function of an absolute ideal class $K_0$ of $K$ and $W\in K_0^{-1}$. Then we have
   \begin{equation}\label{eqn:1}
   \zeta_K(2k,K_0)=\frac{1}{2}\textup{Norm}(W)^{2k}\zeta(2k,W,\epsilon_1,\epsilon_2).
   \end{equation}
   \end{thm}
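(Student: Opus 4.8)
The plan is to derive \eqref{eqn:1} as the degree-three specialization of Shintani's method for evaluating partial zeta functions of totally real fields, so the strategy is to show how the right-hand side is assembled from first principles. First I would express $\zeta_K(s,K_0)$ through the auxiliary ideal $W$. Since $W\in K_0^{-1}$, every integral ideal in the class $K_0$ has the form $(\mu)W^{-1}$ with $\mu\in W\setminus\{0\}$, and two such $\mu$ define the same ideal exactly when they are associate. Because $\textup{N}((\mu)W^{-1})=|\textup{N}(\mu)|/\textup{Norm}(W)$, this gives
\begin{displaymath}
\zeta_K(s,K_0)=\textup{Norm}(W)^{s}\sum_{\mu\in(W\setminus\{0\})/\mathcal{O}_K^{\times}}|\textup{N}(\mu)|^{-s},
\end{displaymath}
which already accounts for the prefactor $\textup{Norm}(W)^{2k}$; the remaining constant $\tfrac12$ is the residual normalization left after passing from the full unit group to its totally positive subgroup and symmetrizing under $\mu\mapsto-\mu$.

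Next I would invoke Shintani's cone decomposition for the action of the totally positive units on the positive orthant $\mathbb{R}_{>0}^{3}$ under the three real embeddings. The geometric input is that a fundamental domain for this action is an essentially disjoint finite union of half-open simplicial cones whose generators are monomials in $\epsilon_1,\epsilon_2$; here the two relevant cones are spanned by the rows of $E_1$ and $E_2$, i.e.\ by $\{1,\epsilon_\nu,\epsilon_1\epsilon_2\}$ for $\nu=1,2$. The scalar $L$ together with $\textup{sign}(L)$ and $\textup{sign}(\det E_\nu)$ records the orientation of these cones, and the degeneracy convention $M:=0$ when $\det E_\nu=0$ simply discards a collapsed cone. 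Using the dual basis $\widetilde{w_1},\widetilde{w_2},\widetilde{w_3}$ of $W$ determined by $\textup{Tr}(w_i\widetilde{w}_j)=\delta_{ij}$, with $\rho=\widetilde{w_3}$, to coordinatize the lattice, the portion of the above sum supported on each cone becomes a three-dimensional Shintani (Barnes-type) zeta function whose defining linear forms are the entries of $E_\nu B_\rho$.

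The core step is the evaluation of each such cone zeta function at $s=2k$. I would expand it through its exponential generating function and extract the value at a positive even integer; the standard evaluation of these multiple series at positive even arguments is what produces the transcendental factor $(2\pi i)^{3k}$ and the multiple Bernoulli data $\mathbf{B}(3,m_1,m_2,3k-(m_1+m_2),(E_\nu B_\rho)^*,\mathbf{0})$ and $\widetilde{B}_{3k-2}(0)$, while the successive multinomial coefficients $\binom{3k}{m_1,m_2}$, $\binom{m_1-1}{\cdots}$, $\binom{m_2-1}{\kappa_1,\mu_1}$, and $\binom{3k-1-(m_1+m_2)}{\kappa_2,\mu_2}$ arise from the corresponding binomial expansions. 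The interior lattice points assemble into the main term $M(k,\nu,\cdot,\cdot)$, whereas the lattice points lying on the walls of the half-open cones, treated by inclusion–exclusion, collapse into the boundary correction $C(k,\nu,\cdot,\cdot)$; the numerous $\textup{sign}(\cdots)$ summands in $C$ are precisely the bookkeeping of which faces of each cone are counted. Summing over the two cones $\nu=1,2$, with $(\epsilon_1,\epsilon_2)$ and its transposition inserted into the $(\tau_1,\tau_2)$ slots, reproduces $\zeta(2k,W,\epsilon_1,\epsilon_2)$, and restoring the prefactors yields \eqref{eqn:1}.

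I expect the principal obstacle to be this explicit closed-form evaluation of the three-dimensional lattice-point sums: reconciling Shintani's abstract Bernoulli-polynomial value with the precise combinatorial shape of $M$, and, most delicately, handling the half-open boundary so that the wall contributions telescope into the compact sign expression $C$. Controlling the orientations through $\textup{sign}(L)$ and $\textup{sign}(\det E_\nu)$ and pinning down the residual constant $\tfrac12$ are the points where sign errors are most likely to creep in, so I would verify them against a small worked example before trusting the general formula.
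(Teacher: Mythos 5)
The paper does not actually prove Theorem \ref{theorem:1.1}: it is quoted verbatim from Halbritter and Pohst \cite{HP} (``Halbritter and Pohst proved:''), so your attempt can only be measured against the method of that source, which your outline resembles in broad strokes but does not reproduce at the decisive step. The genuine gap is analytic: you propose to evaluate each cone zeta function by expanding ``through its exponential generating function'' and extracting the value at $s=2k$. That is Shintani's mechanism, and it yields values at \emph{non-positive} integers, where the analytic continuation lands on Bernoulli polynomials; it does not produce values at positive even arguments, which is what equation \eqref{eqn:1} asserts. The evaluation at $s=2k$ requires instead the Fourier-analytic route (Hurwitz-type expansion of periodized Bernoulli functions via Poisson summation over the dual lattice --- hence the role of the dual basis and $\rho=\widetilde{w_3}$), and that route is the actual source of the factor $(2\pi i)^{3k}/(3k)!$, of the periodized $\widetilde{B}_r$ inside the generalized Dedekind sums $\mathbf{B}(3,m_1,m_2,3k-(m_1+m_2),(E_\nu B_\rho)^*,\mathbf{0})$, and of the trace $\textup{Tr}_{\widehat{K}/\mathbb{Q}}$ with its normalization $[\widehat{K}:\mathbb{Q}]^{-1}$. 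Your sketch never explains why the Galois closure $\widehat{K}=K(\sqrt{\Delta})$ appears at all; it must, because when $K$ is non-normal the monomials in the conjugates $\tau_i',\tau_i''$ do not lie in $K$, and only the $\widehat{K}/\mathbb{Q}$-trace renders the expression rational. Since you yourself defer ``reconciling Shintani's abstract Bernoulli-polynomial value with the precise combinatorial shape of $M$'' and the entire derivation of $C$ to later verification, none of the characteristic content of the identity is established by the proposal.

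A second concrete difficulty: the two cones generated by $\{1,\epsilon_\nu,\epsilon_1\epsilon_2\}$, $\nu=1,2$, do not in general constitute an essentially disjoint fundamental domain for the totally positive units acting on the positive octant. The theorem's signed apparatus --- $\textup{sign}(L)$, $(-1)^\nu$, $\textup{sign}(\textup{det}E_\nu)$, and the convention $M:=0$ when $\textup{det}E_\nu=0$ --- exists precisely because the underlying decomposition is a signed, inclusion--exclusion identity of characteristic functions of possibly overlapping or degenerate cones, and proving that identity is itself a substantive lemma that your ``geometric input'' assumes. Together with the undetermined constant $\tfrac{1}{2}$ and the asserted-but-not-derived wall expression $C$, this leaves the proposal as a plausible roadmap toward the argument of \cite{HP} rather than a proof; for the purposes of this paper, the citation to \cite{HP} that the author gives is the appropriate and sufficient justification.
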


 \textit{Remark} 1.
     For $k,l,m \in \mathbb{Z}$,
    \begin{equation*}
    \binom{k}{l,m}:= \begin{cases}\frac{k!}{l! m! (k-(l+m))!} &  \textup{if} \ \ k,l,m,k-(l+m)\in \mathbb{N}\cup \{0\}\\
                                  (-1)^{l+m}\binom{l+m}{l} &  \textup{if} \ \ k=-1 \ \textup{and} \ \ l,m\in \mathbb{N}\cup \{0\}\\
                                   0 &  \textup{otherwise}.\\
                     \end{cases}
    \end{equation*}

    \textit{Remark} 2.
    Let $A=(a_{ij})_{n,n}$ be a regular $(n,n)$-matrix with integral coefficients, $(A_{ij})_{n,n}:=(\textup{det}A)A^{-1}$. Let
    \begin{equation*}
    \widetilde{B}_r(x):=\begin{cases} B_r(x-[x]) & \textup{if} \ \ r=0 \,\, \textup{or} \,\, r\geq2 \,\, \textup{or} \,\, r=1\wedge x\in \!\!\!\!\! / \ \mathbb{Z} \\
                                      0 &  \textup{if} \ \ r=1\wedge x\in \mathbb{Z},
                        \end{cases}
    \end{equation*}
    where $B_r(y)$ is defined as usual by $ze^{yz}(e^z-1)^{-1}=\sum_{r=0}^\infty B_r(y)z^r/r!$. Then, for $\mathbf{r}=(r_1, \ldots, r_n)\in (\mathbb{N}\cup \{0\})^n$,

    \begin{equation*}
    \mathbf{B}(n,\mathbf{r},A,\mathbf{0})=\sum_{\kappa_1=0}^{|\textup{det}A|-1}\cdots\sum_{\kappa_n=0}^{|\textup{det}A|-1}\prod_{i=1}^n
    \widetilde{B}_{r_i}(\frac{1}{\textup{det}A}\sum_{j=1}^n A_{ij}\kappa_j).
    \end{equation*}

Next, we introduce Siegel's formula for
the values of the Dedekind zeta function of a totally
real algebraic number field at negative odd integers.

For an ideal $I$ of the ring of integers $\mathcal{O}_K$, we define the sum
of ideal divisors function $\sigma_r(I)$ by

\begin{equation}
  \sigma_r(I)=\sum_{J \mid I} N_{K/\mathbb{Q}}(J)^r,
\end{equation}
where $J$ runs over all ideals of $\mathcal{O}_K$ which divide $I$.
Note that, if $K=\mathbb{Q}$ and $I=(n)$, our definition coincides with
the usual sum of divisors function

\begin{equation}
  \sigma_r(n)=\sum_{d \mid n \atop d>0} d^r.
\end{equation}
Now let $K$ be a totally real algebraic number field. For
$l,b=1,2, \ldots ,$ we define

\begin{equation}\label{eqn:6}
  S_l^{K}(2b)= \sum_{
  \begin{smallmatrix}
    \nu \in \mathcal{D}_K^{-1} \\
              \nu \gg\,  0 \\
    \scriptsize{\textup{Tr}_{K/\mathbb{Q}}(\nu)=\, l} \\
  \end{smallmatrix}} \sigma_{2b-1}((\nu)\mathcal{D}_K),
\end{equation}
where $\mathcal{D}_K$ is the different of $K$.
At this moment, we remark that this is a finite sum.
Siegel\cite{Si} proved:

\begin{thm}\label{theorem:1.2}
Let $b$ be a natural number, $K$ a totally real algebraic
number field of degree $n$, and $h=2bn$. Then

  \begin{equation}
    \zeta_K(1-2b)=2^n \sum_{l=1}^r b_l(h)S_l^K(2b).
  \end{equation}
The numbers $r\geq 1$ and $b_1(h), \ldots , b_r(h) \in \mathbb{Q}$
depend only on $h$. In particular,
  \begin{equation}
    r = \textup{dim}_\mathbb{C}\mathcal{M}_h,
  \end{equation}
where $\mathcal{M}_h$ denotes the space of modular forms of weight
$h$. Thus by a well-known formula,

  \begin{equation*}
   r=\begin{cases} [\frac{h}{12}] & \mbox{ if } \ \
                 h \equiv  2\ (\mbox{\rm mod}\, 12)\\
                [\frac{h}{12}]+1 & \mbox{ if } \ \ h \equiv \!\!\!\!\! / \ 2\ (\mbox{\rm mod}\, 12).\\
      \end{cases}
  \end{equation*}
\end{thm}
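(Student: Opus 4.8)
The plan is to exhibit both sides as data attached to a single elliptic modular form, obtained by restricting to the diagonal a Hilbert--Eisenstein series for $K$. First I would attach to the totally real field $K$ of degree $n$ the Hilbert modular Eisenstein series of weight $2b$,
\[
G_{2b}(z_1,\dots,z_n)=\sum_{(\mu,\nu)}\ \prod_{j=1}^{n}\bigl(\mu^{(j)}z_j+\nu^{(j)}\bigr)^{-2b},
\]
where the $z_j$ range over the upper half-plane and the pair $(\mu,\nu)$ runs over a suitable set of representatives for the Hilbert modular group of $K$. By Hecke's classical computation, $G_{2b}$ is holomorphic, transforms with weight $2b$ in each of the $n$ variables, and has a Fourier expansion whose constant term is a fixed nonzero rational multiple of $\zeta_K(1-2b)$ and whose coefficient indexed by a totally positive $\nu\in\mathcal{D}_K^{-1}$ is the \emph{same} multiple of $\sigma_{2b-1}\bigl((\nu)\mathcal{D}_K\bigr)$; the factor $2^n$ in the statement will emerge from this normalization together with the contribution of the $n$ real places.

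Second, I would restrict to the diagonal by setting $z_1=\dots=z_n=z$ and putting $\Phi(z):=G_{2b}(z,\dots,z)$. Each of the $n$ factors contributes weight $2b$, so $\Phi$ is a modular form of weight $h=2bn$ for $\mathrm{SL}_2(\mathbb{Z})$, and it is holomorphic at the cusp because $G_{2b}$ is. On the diagonal one has $\mathrm{Tr}_{K/\mathbb{Q}}(\nu z)=\mathrm{Tr}_{K/\mathbb{Q}}(\nu)\,z$, so with $q=e^{2\pi i z}$ the exponential $e^{2\pi i\,\mathrm{Tr}_{K/\mathbb{Q}}(\nu z)}$ collapses to $q^{\mathrm{Tr}_{K/\mathbb{Q}}(\nu)}$. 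Grouping the Fourier coefficients of $G_{2b}$ according to the integer value $\mathrm{Tr}_{K/\mathbb{Q}}(\nu)=l$ then turns the coefficient of $q^{l}$ in $\Phi$ into precisely the finite sum $S_l^{K}(2b)$ (up to the fixed multiple above), while the constant term of $\Phi$ still records $\zeta_K(1-2b)$.

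Third comes a purely linear-algebraic step on the finite-dimensional space $\mathcal{M}_h$. Writing $a_l$ for the $l$-th Fourier coefficient of a form $f\in\mathcal{M}_h$ and recalling $\dim_{\mathbb{C}}\mathcal{M}_h=r$, the functionals $f\mapsto a_1,\dots,f\mapsto a_r$ span the dual space, so the constant-term functional $f\mapsto a_0$ can be written as a fixed linear combination $a_0=\sum_{l=1}^{r}c_l(h)\,a_l$ valid on all of $\mathcal{M}_h$; because $\mathcal{M}_h$ has a basis of forms with rational Fourier coefficients (monomials in $E_4$ and $E_6$), the coefficients $c_l(h)$ are rational and depend only on $h$. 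Applying this identity to $\Phi$ and cancelling the common normalization constant yields $\zeta_K(1-2b)=2^{n}\sum_{l=1}^{r}b_l(h)\,S_l^{K}(2b)$ with $b_l(h)\in\mathbb{Q}$. The identity $r=\dim_{\mathbb{C}}\mathcal{M}_h$ together with the classical dimension formula for $\mathrm{SL}_2(\mathbb{Z})$ gives the stated case distinction modulo $12$.

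I expect the main obstacle to lie in the first step: pinning down Hecke's normalization so that the constant term of $G_{2b}$ is exactly a rational multiple of $\zeta_K(1-2b)$ and the positive Fourier coefficients are the matching multiple of $\sigma_{2b-1}\bigl((\nu)\mathcal{D}_K\bigr)$, and in checking that the diagonal restriction remains holomorphic of weight $h$ at the cusp. A secondary point requiring care in the third step is the linear independence of the functionals $a_1,\dots,a_r$ on $\mathcal{M}_h$ (equivalently, that no nonzero form of weight $h$ has vanishing first $r$ positive coefficients but nonzero constant term); this non-degeneracy is exactly what makes the $b_l(h)$ well defined. Once these are secured, the extraction of the rational numbers $b_l(h)$ is routine.
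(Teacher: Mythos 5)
The paper itself offers no proof of this theorem: it is quoted directly from Siegel \cite{Si}, so the relevant comparison is with the cited proof (see also Zagier \cite{Za}, which the paper uses for $b_1(8)=-1/504$). Your outline does follow that route---diagonal restriction of a Hilbert--Eisenstein series of weight $2b$ gives $\Phi\in\mathcal{M}_h$ whose constant term is a fixed rational multiple of $\zeta_K(1-2b)$ and whose $l$-th coefficient is the matching multiple of $S_l^K(2b)$, reducing everything to a statement about Fourier coefficients in weight $h$. But two of your steps are not proofs as written, and one of them is the heart of the theorem.

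The essential gap is your third step. From $\dim_{\mathbb{C}}\mathcal{M}_h=r$ one gets, by dividing by a power of $\Delta$, that $f\mapsto(a_0,a_1,\dots,a_{r-1})$ is injective; this yields only that the $r+1$ functionals $a_0,a_1,\dots,a_r$ satisfy \emph{some} nontrivial relation $\sum_{l=0}^{r}c_l a_l=0$ on $\mathcal{M}_h$. Nothing in this dimension count forbids $c_0=0$, i.e.\ forbids a nonzero $f\in\mathcal{M}_h$ with $a_1=\dots=a_r=0$ but $a_0\neq 0$. Your opening assertion that ``the functionals $f\mapsto a_1,\dots,f\mapsto a_r$ span the dual space'' is therefore exactly the statement to be proved, and although you flag it at the end as a ``secondary point requiring care,'' it is not secondary: it is the content of Siegel's theorem, and you supply no argument for it. The standard way to close it: the space of weakly holomorphic forms of weight $2-h$ with pole of order at most $r$ at infinity is one-dimensional (multiplication by $\Delta^{r}$ identifies it with $\mathcal{M}_{12r+2-h}$, which has dimension one for every even $h\geq 4$); if $g=\sum_{m\geq -r}c_m q^m$ generates it, then the residue theorem applied to the weight-$2$ form $fg$ shows its $q^0$-coefficient vanishes, giving $c_0a_0+\sum_{l=1}^{r}c_{-l}a_l=0$, and one must then prove the nonvanishing $c_0\neq 0$, which Siegel establishes via an explicit construction of $g$ from $E_4$, $E_6$, $\Delta$; rationality of the $b_l(h)$ follows because $g$ may be chosen with rational coefficients, not merely because $\mathcal{M}_h$ has a rational basis (your linear system must also be invertible over $\mathbb{Q}$, which is again the same nondegeneracy). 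A second, lesser gap: for $b=1$---the only case the paper actually uses, since it needs $\zeta_K(-1)$---the series defining the weight-$2$ Eisenstein series is not absolutely convergent, so ``Hecke's classical computation'' does not apply as stated; one needs Hecke's limit trick together with the fact that for $n\geq 2$ the resulting series, and hence its diagonal restriction, is holomorphic. Your sketch silently assumes a convergent holomorphic $G_{2b}$ for all $b\geq 1$.
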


    Now, we will introduce our target fields. Let $m (\geq 4)$ be a rational integer and $K_m$(or simply $K$)$=\mathbb{Q}(\alpha)$
    be the non-normal totally real cubic number field
    (whose arithmetic was studied in \cite{Lou1}) associated with the irreducible cubic polynomial
    \begin{equation}\label{eqn:2}
                f_m(x)=x^3-mx^2-(m+1)x-1\in \mathbb{Z}[x]
    \end{equation}
    of positive discriminant
    \begin{equation*}
    D_m=(m^2+m-3)^2-32>0
    \end{equation*}
    and with three distinct real roots $\alpha_3 < \alpha_2 < \alpha_1 = \alpha$.
    We borrow known results for arithmetic of $K_m$.
      \begin{thm}\label{theorem:1.3}

          \begin{enumerate}
          \item[{\rm(1)}]
                The set $\{1, \alpha, {\alpha}^2\}$ forms an integral basis of the
                ring $\mathcal{O}_K$ of algebraic integers of $K$ if and only if one of the
                following conditions holds true:
                \begin{enumerate}
                    \item[{\rm(i)}] $m \equiv \!\!\!\!\! / \,\, 3\,(\mbox{\rm mod}\, 7)$ and $D_m$ is
                    square-free,

                    \item[{\rm(ii)}] $m \equiv 3\,(\mbox{\rm mod}\,7)$, $m \equiv \!\!\!\!\! / \,\,
                    24\,(\mbox{\rm mod}\, 7^2)$ and $\frac{D_m}{7^2}$ is
                    square-free.
                 \end{enumerate}
          \item[{\rm(2)}]
          The full group of algebraic units of $K_m$ is ${<}{-1},\alpha,{\alpha+1}{>}$.
                 \end{enumerate}
            \end{thm}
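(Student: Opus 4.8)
The plan is to treat the two assertions separately, resting part (1) on the index--discriminant relation $\mathrm{disc}(f_m)=D_m=[\mathcal{O}_K:\mathbb{Z}[\alpha]]^2\,\Delta_K$, so that $\{1,\alpha,\alpha^2\}$ is an integral basis exactly when $j:=[\mathcal{O}_K:\mathbb{Z}[\alpha]]$ equals $1$. A prime $p$ divides $j$ only if $p^2\mid D_m$, and whether it does is decided by Dedekind's criterion applied to $\bar f_m\in\mathbb{F}_p[x]$. I would first dispose of the small primes: since $m^2+m$ is always even, $D_m=(m^2+m-3)^2-32$ is odd, so $2\nmid D_m$; and a one-line residue check gives $D_m\not\equiv 0\pmod 3$ for every $m$. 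Hence $2,3\nmid j$ and only the tame primes $p\ge 5$ are in play.

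The heart of part (1) is the elementary lemma, proved by matching coefficients: \emph{$\bar f_m=(x-a)^3$ in $\mathbb{F}_p[x]$ if and only if $p=7$ and $m\equiv 3\pmod 7$, in which case $a=1$.} Indeed $\bar f_m=(x-a)^3$ forces $m\equiv 3a$, $a^3\equiv 1$, and $3a^2+3a+1\equiv 0\pmod p$; taking $a=1$ yields $p\mid 7$, while a primitive cube root would yield $p\mid 2$, which is impossible. Granting this, the ``only if'' direction runs cleanly: for $p\ge 5$, $p\ne 7$, if $p\nmid j$ then by Dedekind--Kummer the splitting of $p$ mirrors the factorization of $\bar f_m$; since in a cubic field tame total ramification is the only source of $p^2\mid\Delta_K$, and total ramification would force $\bar f_m=(x-a)^3$ (excluded), we get $p^2\nmid\Delta_K$, hence $p^2\nmid D_m$. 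Contrapositively, for $p\ge 5$, $p\ne 7$ we have $p^2\mid D_m\Rightarrow p\mid j$; the same reasoning applies verbatim to $p=7$ when $m\not\equiv 3\pmod 7$.

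It remains to analyze $p=7$ with $m\equiv 3\pmod 7$, where $\bar f_m=(x-1)^3$. Here I would apply Dedekind's criterion with $g=x-1$, so that $F=\tfrac1 7\big((x-1)^3-f_m(x)\big)=\tfrac1 7\big((m-3)x^2+(m+4)x\big)\in\mathbb{Z}[x]$; the criterion gives $7\nmid j$ iff $F(1)=\tfrac{2m+1}{7}\not\equiv 0\pmod 7$, i.e. iff $m\not\equiv 24\pmod{49}$. Writing $m=3+7t$ shows $m^2+m-3\equiv 9\pmod{49}$, so $49\mid D_m$ always in this case, and a refinement modulo $343$ shows $7^3\mid D_m$ precisely when $m\equiv 24\pmod{49}$; thus $7^2\parallel D_m$ off the excluded residue. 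Assembling: if $m\not\equiv 3\pmod 7$ then $j=1$ iff no $p\ge 5$ has $p^2\mid D_m$, i.e. iff $D_m$ is square-free (case (i)); if $m\equiv 3\pmod 7$ then $j=1$ forces $m\not\equiv 24\pmod{49}$, and there $7^2\parallel D_m$, so $j=1$ iff $D_m/7^2$ is square-free (case (ii)). The main obstacle is the cube-factorization lemma together with the tame input ``$p^2\mid\Delta_K\Rightarrow$ total ramification''; once these are in hand the discriminant bookkeeping is routine.

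For part (2), I would first verify that $\alpha$ and $\alpha+1$ are units: from $f_m(0)=-1$ and $f_m(-1)=-1$ one reads $\mathrm{N}(\alpha)=1$ and $\mathrm{N}(\alpha+1)=1$. Their logarithmic embedding vectors are linearly independent, so they are multiplicatively independent and generate a finite-index subgroup of the rank-two group $\mathcal{O}_K^\times$. The plan is to bound $\iota:=[\mathcal{O}_K^\times:\langle -1,\alpha,\alpha+1\rangle]$ through the identity $R(\alpha,\alpha+1)=\iota\cdot R_K$: I would evaluate the $2\times 2$ regulator determinant of $\{\alpha,\alpha+1\}$ using the ordering $\alpha_3<\alpha_2<\alpha_1$ and the standard asymptotics of the roots (notably $\alpha_1\sim m$), obtaining $R(\alpha,\alpha+1)\asymp\log^2 m$, and compare it with a Cusick/Louboutin-type lower bound $R_K\gg\log^2 D_m$ valid for totally real cubic fields. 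Forcing $\iota=1$ uniformly in $m$ is the delicate point, since the two quantities are of the same order of magnitude; I expect to need a sharp regulator lower bound and a careful asymptotic, with the finitely many small values of $m$ not covered by the asymptotic inequality checked directly.
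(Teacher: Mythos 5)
The paper contains no internal proof of this theorem: its proof is the single line ``See \cite{Lou1}'', so your attempt cannot be compared with an argument in the paper itself, only judged on its merits against what the cited literature must establish. For part (1) your argument is correct and essentially complete, and the computations check out: $D_m$ is odd since $m^2+m$ is even, and $D_m\equiv 1$ or $2\pmod 3$ for every $m$; matching coefficients in $\bar f_m=(x-a)^3$ gives $m\equiv 3a$, $a^3\equiv 1$ and $3a^2+3a+1\equiv 0\pmod p$, with $a=1$ forcing $p=7$, $m\equiv 3\pmod 7$, and a primitive cube root forcing $p\mid 2$, which is excluded; Dedekind's criterion at $p=7$ with $g=x-1$ gives $F=\frac1 7\bigl((m-3)x^2+(m+4)x\bigr)$ and $F(1)=\frac{2m+1}{7}$, so $7$ divides the index exactly when $2m+1\equiv 0\pmod{49}$, i.e. $m\equiv 24\pmod{49}$; and writing $m=3+7t$, $u=t(t+1)$, one gets $m^2+m-3=9+49u$, hence $D_m=49(1+18u+49u^2)$, so $49\mid D_m$ always while $7^3\mid D_m$ exactly when $t\equiv 3\pmod 7$, i.e. $m\equiv 24\pmod{49}$. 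Combined with the tame fact that for $p\ge 5$ in a cubic field $p^2\mid\Delta_K$ forces total ramification, both directions of (1) follow as you assembled them.

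Part (2) is where the real content of the theorem sits, and there your proposal is a plan rather than a proof. The easy steps are fine: $\mathrm{N}(\alpha)=\mathrm{N}(\alpha+1)=1$ from $f_m(0)=f_m(-1)=-1$, and independence of the logarithmic vectors. But the theorem is precisely the claim that $\iota=[\mathcal{O}_K^\times:\langle -1,\alpha,\alpha+1\rangle]=1$, and as written your comparison $R(\alpha,\alpha+1)\asymp\log^2 m$ versus $R_K\gg\log^2 D_m$ cannot decide this, because $\iota=R(\alpha,\alpha+1)/R_K$ and both implicit constants matter: with $D_m\sim m^4$, Cusick's explicit bound $R_K>\frac{1}{16}\log^2(d_K/4)$ gives $R_K\gtrsim\log^2 m$, while the root asymptotics $\alpha_1=m+1+O(m^{-1})$, $\alpha_2\sim -1/(m+1)$, $\alpha_3\to -1$ give $R(\alpha,\alpha+1)=(1+o(1))\log^2 m$; the ratio therefore tends to $1$, and since $\iota$ is a positive integer, $\iota<2$ does follow for all sufficiently large $m$ — but to conclude you must make the $o(1)$ and the regulator lower bound explicit, extract an effective threshold, and verify the finitely many remaining $m$ directly. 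You flag this yourself (``I expect to need a sharp regulator lower bound''), which is honest, but it means statement (2) remains unproven in your write-up; this quantitative closing step is exactly the work the paper outsources to Louboutin, whose treatment proceeds along the same regulator-comparison lines you sketch. In short: part (1) sound and complete; part (2) the right strategy with the decisive step left open.
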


        \begin{proof}
            See \cite{Lou1}.
        \end{proof}

 \section{class number one criterion for $K_m$}
       In this section, to have the value of $\zeta_K(-1,P)$, we apply Theorem \ref{theorem:1.1} to $K_m$. On the other hand,
       we evaluate the upper bound of $\zeta_K(-1)$ by using Theorem \ref{theorem:1.2}. Finally, combining these results,
       we give a class number one criterion for $K_m$.

       We take $W=\mathcal{O}_K=(\alpha)$. Since the ideal class containing $\mathcal{O}_K$ is the principal ideal class $P$,
       by $(\ref{eqn:1})$, we have

       \begin{equation*}
       \zeta_K(2,P)=\frac{1}{2}\, \zeta(2,\mathcal{O}_K,\alpha,{\alpha+1}).
       \end{equation*}
      By definition,
      \begin{eqnarray*}
   \zeta(2,\mathcal{O}_K,\alpha,\alpha+1)= M(2,1,\alpha,\alpha+1)+M(2,2,\alpha+1,\alpha)\\
     + C(2,1,\alpha,\alpha+1)+C(2,2,\alpha+1,\alpha).
   \end{eqnarray*}
   Let $\{\widetilde{w_1},\widetilde{w_2},\widetilde{w_3}\}$ be a dual basis of $\mathcal{O}_K$.
   Then, by a simple computation, we get
    \begin{eqnarray*}
   \rho=\widetilde{w_3}=\frac{-1}{D_{m}}\{(m^3+5m^2+5m+4)+(2m^3+7m^2+7m+9)\alpha\\-2(m^2+3m+3)\alpha^2\}
   \end{eqnarray*}
   This makes it possible to determine matrices $E_1, E_2$ and $B_\rho$.
   Now, we note that 3-fold Dedekind sum $\mathbf{B}(3,m_1,m_2,6-(m_1+m_2),(E_\nu B_\rho)^*,\mathbf{0})$ vanishes
   when $m_1$ or $m_2$ is odd. Next, we need the computation for trace. This computation is very long but elementary.
   Combining these data, we have
   \begin{eqnarray*}
      \lefteqn{M(2,1,\alpha,\alpha+1)=-(4m^9+54m^8+304m^7+979m^6}\\\nonumber
      &&+2119m^5+3234m^4+3327m^3+2067m^2+72m-714)\pi^6/2835D_m^{3/2}
   \end{eqnarray*}
    \begin{eqnarray*}
      \lefteqn{M(2,2,\alpha+1,\alpha)=(4m^9+54m^8+304m^7+985m^6}\\\nonumber
      &&+2137m^5+3204m^4+3237m^3+2091m^2+144m-714)\pi^6/2835D_m^{3/2}
   \end{eqnarray*}
   On the other hand, the calculation of $C(2,1,\alpha,\alpha+1)$(resp.~$C(2,2,\alpha+1,\alpha)$) is
   simpler than one of $M(2,1,\alpha,\alpha+1)$(resp.~$M(2,2,\alpha+1,\alpha)$).
   In fact,
   \begin{equation*}
   C(2,1,\alpha,\alpha+1)=\frac{2\pi^6}{45D_m^{3/2}}, \  C(2,2,\alpha+1,\alpha)=-\frac{2\pi^6}{45D_m^{3/2}}.
   \end{equation*}
   Then, by collecting these results, we have the following theorem.
        \begin{thm}
       Let $m(\geq4)$ be an integer which satisfies the conditions of Theorem $\ref{theorem:1.3}$ and $K_m$ the non-normal totally real
       cubic field defined by $(\ref{eqn:2})$. Let $P$ be the principal ideal class of $K_m$. Then we have
       \begin{equation*}
       \zeta_K(2,P)=\frac{m(m^5+3m^4-5m^3-15m^2+4m+12)\pi^6}{945(D_m)^{3/2}}.
       \end{equation*}
       Moreover, by a functional equation,
        \begin{equation*}
        \zeta_K(-1,P)=-\frac{m(m^5+3m^4-5m^3-15m^2+4m+12)}{7560}.
        \end{equation*}
       \end{thm}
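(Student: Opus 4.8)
The plan is to obtain the first identity by simply collecting the four quantities that were computed just above, and then to pass to $s=-1$ via the functional equation. First I would substitute the displayed values of $M(2,1,\alpha,\alpha+1)$, $M(2,2,\alpha+1,\alpha)$, $C(2,1,\alpha,\alpha+1)$ and $C(2,2,\alpha+1,\alpha)$ into the relation $\zeta_K(2,P)=\tfrac12\,\zeta(2,\mathcal{O}_K,\alpha,\alpha+1)$ together with the expansion of $\zeta(2,\mathcal{O}_K,\alpha,\alpha+1)$ as the sum of these four terms. The two $C$-terms are exact negatives of one another and cancel. In the sum $M(2,1,\alpha,\alpha+1)+M(2,2,\alpha+1,\alpha)$ the coefficients of $m^9,m^8,m^7$ and the constant terms coincide, so after the opposite overall signs are accounted for they cancel, leaving $(6m^6+18m^5-30m^4-90m^3+24m^2+72m)\pi^6/2835D_m^{3/2}$. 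Factoring out $6m$ gives $6m(m^5+3m^4-5m^3-15m^2+4m+12)\pi^6/2835D_m^{3/2}$, and the overall factor $\tfrac12$ turns $6/2835$ into $1/945$, which is exactly the claimed value of $\zeta_K(2,P)$. This step is purely mechanical once the $M$ and $C$ values are in hand.

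For the value at $-1$ I would invoke the functional equation for the partial zeta function. For a totally real cubic field the completed function $|\Delta_K|^{s/2}\pi^{-3s/2}\Gamma(s/2)^3\zeta_K(s,\mathcal{C})$ is symmetric under $s\mapsto 1-s$, up to replacing the class $\mathcal{C}$ by $\mathcal{C}^{-1}[\mathcal{D}_K]$. Evaluating at $s=2$ with $\mathcal{C}=P$, and using $\Gamma(1)=1$ and $\Gamma(-\tfrac12)=-2\sqrt{\pi}$, produces the universal archimedean ratio $\zeta_K(-1,\cdot)=-\,|\Delta_K|^{3/2}(8\pi^6)^{-1}\zeta_K(2,\cdot)$. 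Under the hypotheses of Theorem \ref{theorem:1.3} the set $\{1,\alpha,\alpha^2\}$ is an integral basis, so $\Delta_K=D_m$; substituting the value of $\zeta_K(2,P)$ from the first step, the factor $\pi^6/D_m^{3/2}$ cancels against $|\Delta_K|^{3/2}/\pi^6$ and $8\cdot945=7560$, yielding the stated rational number.

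The main obstacle I anticipate is the functional-equation step, and specifically confirming that it relates $\zeta_K(2,P)$ to $\zeta_K(-1,P)$ for the \emph{same} principal class rather than to a different class. This is where I would use that, since $\mathcal{O}_K=\mathbb{Z}[\alpha]$ under the hypotheses of Theorem \ref{theorem:1.3}, the different is the principal ideal $\mathcal{D}_K=(f_m'(\alpha))$, so that the dual class $P^{-1}[\mathcal{D}_K]$ coincides with $P$ and the symmetry is genuinely class-preserving here. Beyond this, the only error-prone points are bookkeeping ones: recording the correct powers of $\pi$ and of $D_m$ in the Gamma-factor normalization and the value $\Gamma(-\tfrac12)=-2\sqrt{\pi}$, since a slip there would merely rescale the final constant.
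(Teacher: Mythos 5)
Your proposal is correct and follows essentially the same route as the paper: the paper likewise obtains $\zeta_K(2,P)$ by summing the four displayed $M$- and $C$-values (the $C$-terms cancelling, the $M$-terms leaving $6m(m^5+3m^4-5m^3-15m^2+4m+12)\pi^6/2835D_m^{3/2}$, halved to give the $1/945$ constant) and then passes to $s=-1$ ``by a functional equation,'' exactly the symmetry $\zeta_K(-1,P)=-|\Delta_K|^{3/2}(8\pi^6)^{-1}\zeta_K(2,P)$ with $\Delta_K=D_m$ and $8\cdot 945=7560$ that you spelled out. Your verification that the dual class $P^{-1}[\mathcal{D}_K]$ equals $P$ because $\mathcal{O}_K=\mathbb{Z}[\alpha]$ forces $\mathcal{D}_K=(f_m'(\alpha))$ to be principal is a detail the paper leaves implicit, and it is correctly handled.
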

       Next, by Theorem~\ref{theorem:1.2}, noting that $b_1(8)=-1/504$(cf.\ \cite{Za}), we have
       \begin{equation*}
       \zeta_K(-1)=-\frac{8}{504} S_1^K(2)=-\frac{8}{504}\sum_{\nu \in S_1} \sigma_1((\nu) \mathcal{D}_K),
       \end{equation*}
       where
       \begin{equation*}
       S_1:=\{\nu \in K|\, \nu \in \mathcal{D}_K^{-1}, \nu \gg 0, \textup{Tr}_{K/\mathbb{Q}}(\nu)=1\}.
       \end{equation*}
       Let $T$ be the set of integral points in $(s,t)$-plane corresponding to $S_1$ by one-to-one correspondence in [4, Proposition 2.1].
       This set has been completely determined in [4, Theorem 2.3] as follows:

        \begin{eqnarray*}
                       T \;=&\{& (1,1),\hspace{0.5cm}(1,2),\hspace{1cm}
                        \ldots\hspace{1cm},\hspace{0.5cm} (1,m-1),\\
                        && (2,2),\hspace{0.5cm}(2,3),\hspace{1cm}\ldots
                        \hspace{1cm},\hspace{0.5cm}(2,m),\\
                        && (3,3),\hspace{0.5cm}(3,4),\hspace{1cm}\ldots
                        \hspace{1cm},\hspace{0.5cm}(3,m),\\
                        && \ldots\ldots\hspace{2cm}\ldots\ldots
                        \hspace{1.5cm}\ldots\ldots,\\
                        && (m-2,m-2),(m-2,m-1),(m-2,m),\\
                        && (m-1,m) \: \}.\hspace{2cm}
                    \end{eqnarray*}
        Furthermore, by (26) of \cite{CKL}
       \begin{equation*}
       N((\nu) \mathcal{D}_K)= |f_m(s,t)|,
       \end{equation*}
       where
        \begin{eqnarray*}
                   \lefteqn{ f_m(s,t)=(-s^2+(t+1)s)m^2+((t-2)s^2-(t^2-t)s-(t^2+t))m}\\\nonumber
          & & \ \ \ \ \ \ \ \ \ \ \ \ +(s^3-2s^2-(t^2-3t-1)s+t^3-t-1).
                \end{eqnarray*}
       One can easily check that $f_m(s,t) > 1$ for all $(s,t)\in T$.
       Therefore, we have the following inequalities
         \begin{eqnarray}\label{eqn:3}
                  \ \ \ \ \zeta_K(-1) & \leq & -\frac{8}{504}\sum_{\nu \in S_1}(1+N((\nu)\mathcal{D}_K))\\\nonumber
                   & = & -\frac{8}{504}\{\sharp S_1+\sum_{\nu \in S_1}N((\nu)\mathcal{D}_K)\}\\\nonumber
                   & = & -\frac{8}{504}\{\frac{1}{2}(m^2+m-6)+\sum_{(s,t) \in T}f_m(s,t)\}\\\nonumber
                   & = & -\frac{8}{504}\{\frac{1}{2}(m^2+m-6)+\sum_{t=1}^{m-1}f_m(1,t)\\\nonumber
                   & & \ \ \ \ \ \ \ \ \ +\sum_{s=2}^{m-2} \sum_{t=s}^m f_m(s,t)+f_m(m-1,m)\}\\\nonumber
                   & = &-\frac{m(m^5+3m^4-5m^3-15m^2+4m+12)}{7560} \\\nonumber
                   & = & \zeta_K(-1,P),
         \end{eqnarray}
         and equality holds in $(\ref{eqn:3})$ if and only if $(\nu) \mathcal{D}_K$ is a prime ideal for all $\nu \in S_1$.
          Combining  this computation, we give a class number one criterion for $K_m$.

          \begin{thm}
          Let $m(\geq4)$ be an integer which satisfies the conditions of Theorem $\ref{theorem:1.3}$ and $K_m$ the non-normal totally real
       cubic field defined by $(\ref{eqn:2})$. Then we have
       \begin{equation*}
       h_K=1 \ \textup{if and only if} \ (\nu) \mathcal{D}_K \ \textup{is a prime ideal for all} \ \nu \in S_1.
       \end{equation*}
        \end{thm}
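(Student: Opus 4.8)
The plan is to read off the class number from the discrepancy between the full value $\zeta_K(-1)$, evaluated by Siegel's formula (Theorem~\ref{theorem:1.2}), and the principal partial value $\zeta_K(-1,P)$ computed in the preceding theorem, using the decomposition of the Dedekind zeta function into partial zeta functions over the ideal classes of $K$.

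First I would record the identity $\zeta_K(s)=\sum_{K_0}\zeta_K(s,K_0)$, the sum running over the ideal classes $K_0$ of $K$, specialized to $s=-1$. The structural input is that every partial value $\zeta_K(-1,K_0)$ is strictly negative. For a totally real cubic field this is the standard sign property of partial zeta values at $s=-1$ (they carry the sign $(-1)^{[K:\mathbb{Q}]}$); concretely each is a fixed negative rational multiple of a strictly positive finite sum, as one sees by evaluating the Halbritter--Pohst expression of Theorem~\ref{theorem:1.1} at a representative $W\in K_0^{-1}$, or by the narrow-class refinement of Siegel's theorem. Granting this, I would write $\zeta_K(-1)=\zeta_K(-1,P)+\sum_{K_0\ne P}\zeta_K(-1,K_0)$; since the extra terms are negative,
\[
\zeta_K(-1)\le \zeta_K(-1,P),
\]
and equality holds if and only if no nonprincipal class occurs, that is if and only if $h_K=1$.

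Second I would invoke the chain $(\ref{eqn:3})$ established just above the statement, which evaluates the same number $\zeta_K(-1)$ by Siegel's formula together with the elementary bound $\sigma_1(I)\ge 1+N_{K/\mathbb{Q}}(I)$ --- an equality precisely when $I$ is prime --- and the explicit summation over the lattice set $T$. It yields $\zeta_K(-1)\le\zeta_K(-1,P)$ with equality if and only if $(\nu)\mathcal{D}_K$ is a prime ideal for every $\nu\in S_1$.

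Finally I would splice the two equality criteria through the single real number $\zeta_K(-1)$: the equation $\zeta_K(-1)=\zeta_K(-1,P)$ is equivalent to $h_K=1$ by the first step and to the primality of every $(\nu)\mathcal{D}_K$, $\nu\in S_1$, by the second, so the two conditions are equivalent, which is the assertion. The forward implication is immediate, since $h_K=1$ collapses the class sum to its principal term and forces equality in $(\ref{eqn:3})$; the content lies in the converse, where primality forces $\sum_{K_0\ne P}\zeta_K(-1,K_0)=0$ and the strict negativity of each nonprincipal partial value then excludes any nonprincipal class. I expect the main obstacle to be precisely this strict negativity: one must guarantee that no partial value degenerates to $0$, equivalently that every ideal class carries a nonempty family of totally positive trace-one contributions, so that the vanishing of the sum genuinely forces the absence of nonprincipal classes.
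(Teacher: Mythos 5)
Your proposal is correct and follows essentially the same route as the paper: the paper's argument is precisely the splicing of the inequality $\zeta_K(-1)\leq \zeta_K(-1,P)$ (stated in the introduction, with equality if and only if $h_K=1$) with the chain $(\ref{eqn:3})$, whose equality case is the primality of every $(\nu)\mathcal{D}_K$, $\nu\in S_1$. The one point you develop beyond the paper's terse ``combining this computation'' --- the strict negativity of each partial value $\zeta_K(-1,K_0)$, which the paper leaves implicit in its opening inequality --- is exactly the right hidden lemma, and your justification of it (sign $(-1)^{kn}$ at $s=1-2k$ via the functional equation, positivity of $\zeta_K(2,K_0)$) is sound.
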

        On the other hand, Louboutin\cite{Lou1} showed:
         $$m=4,5,6,8\ \textup{gives all the values of} \ m \ \textup{such that}\ h_K=1.$$
       Therefore, we can conclude that
        \begin{equation*}
       m=4,5,6,8 \ \textup{if and only if} \ (\nu) \mathcal{D}_K \ \textup{is a prime ideal for all} \ \nu \in S_1.
       \end{equation*}

     \textit{Remark} 3.
      Unlike in the simplest cubic fields which is a Galois extension of $\mathbb{Q}$, $N((\nu) \mathcal{D}_K)= |f_m(s,t)|$ is necessarily not prime
      where $(\nu) \mathcal{D}_K$ is a prime ideal for each $\nu \in S_1$.
      For example, $f_m(2,3)=({2m-5})^2$ for the point $(2,3)$ in $T$, but if $m=4,5,6,8$, then each integral ideal $(\nu) \mathcal{D}_K$ corresponding to the
      point $(2,3)$ is prime. Furthermore, $f_m(s,t)$ is a prime for all points only except $(2,3)$ in $T$ when $m=4,5,6,8$.

\textbf{Acknowledgements.}
The author would like to thank the referee for careful reading of the manuscript and helpful comments.

\end{document}